\newtheorem{theorem}{Theorem}[section]
\newtheorem{lemma}[theorem]{Lemma}
\newtheorem{facts}[theorem]{Facts}
\newtheorem{proof of lemma}[theorem]{Proof of Lemma}
\newtheorem{proposition}[theorem]{Proposition}
\newtheorem{corollary}[theorem]{Corollary}
\theoremstyle{definition}
\newtheorem{example}[theorem]{Example}
\newtheorem{remark}[theorem]{Remark}
\numberwithin{equation}{section}
\newcommand{\bigslant}[2]{{\raisebox{.2em}{$#1$}\left/\raisebox{-.2em}{$#2$}\right.}}
\begin{document}


\baselineskip=17pt


\title[] {A Proper Closed Subspace of the Lipschitz Dual Containing the Linear Dual}
\author[Arindam Mandal]{Arindam Mandal$^\dagger$}
\address{Arindam Mandal, School of Mathematical Sciences, National Institute of Science Education and Research
Bhubaneswar, An OCC of Homi Bhabha National Institute, P.O. Jatni, Khurda, Odisha 752050,
India.}
\email{arindam.mandal@niser.ac.in}
\thanks{The author is supported by a research fellowship from the Department of Atomic Energy (DAE), Government of India.}
\thanks{$\dagger$ \tt{Corresponding author}}
\date{}

\begin{abstract} 
Motivated by classical results of Lindenstrauss and recent developments by 
Karn and Mandal, we investigate quotient spaces of the form 
$Lip_0(X)/\mathcal{A}$, where $\mathcal{A}$ is a finite-dimensional subspace, 
showing that these quotients are dual spaces with explicitly describable preduals.

We then focus on $Lip_0^{ph}(X)$, the space of positively homogeneous 
real-valued Lipschitz functions. This space satisfies
$
X^* \subsetneq Lip_0^{ph}(X) \subsetneq Lip_0(X),
$
and is shown to be both a dual space and the preannihilator of a closed subspace 
of the Lipschitz-free space. Consequently it follows that $\bigslant{Lip_0(X)}{Lip_0^{ph}(X)}$ is also a dual space. Furthermore, with a suitable multiplication, 
$(Lip_0^{ph}(X), Lip(\cdot))$ forms a Banach algebra, 
exhibiting structural advantages over $Lip_0(X)$.
\end{abstract}

\subjclass[2023]{Primary 46B10; Secondary 46B20}
\keywords{Positive homogeneous Lipschitz map,  Lipschitz-free space, McShane-type extension theorem for positively homogeneous Lipschitz maps.}

\maketitle
\pagestyle{myheadings}
\markboth{}{Duality and Algebraic Structure in Positively Homogeneous Lipschitz Spaces}
\section{Introduction}
Given a pointed metric space $(M,d,0_M)$ and a Banach space $X$, $Lip_0(M, X)$ denotes the Banach space of all Lipschitz functions from $M$ to $X$, that vanish on $0_M$ with respect to the Lipschitz norm
$$Lip(f):= \sup\limits_{x_1 \neq x_2} \frac{\|f(x_1)-f(x_2)\|}{d(x_1, x_2)}.$$
When $X = \mathbb{R}$ we simply write $Lip_0(M)$ for this space. Over the past two decades, the study of Banach spaces of Lipschitz functions and particularly their canonical preduals, the Lipschitz-free spaces has been among the most active areas of research in Banach space theory. For a comprehensive introduction and additional information, we refer the reader to \cite{ASOLFBS}, \cite{LA2}. One of the earliest results in this area is the extremely useful result by Joram Lindenstrauss~\cite[Theorem~2]{ONPIBS}, according to which 
$X^*$ is $1$-complemented closed subspace of 
$(Lip_0(X), Lip(\cdot))$ (see \cite[Chapter 7]{GNFA} for more details and applications). 
Most recently, Karn and Mandal have shown in~\cite[Theorem 2.5 and Remark~5.2]{KarnMandal2025} 
that the space of bounded linear maps $L(X, Y)$ between the Banach spaces $X$ and $Y$, is $1$-complemented in $Lip_0(X, Y)$ (whenever $Y$ is a dual space) and  the quotient space 
$\bigslant{Lip_0(X, Y)}{L(X, Y)}$ 
is linearly isometrically isomorphic to an appropriate space of operators, whenever $Y$ is a dual injective space. In particular, we observe that for any infinite-dimensional normed linear space $X$, 
the quotient of its linear dual $X^*$ within the Lipschitz dual $Lip_0(X)$ forms a well-behaved 
space of operators (indeed, a dual space). Motivated by these works, the author was led to investigate the following question:

\emph{what can be said about the quotient space $Lip_0(X)/\mathcal{A}$ when $\mathcal{A}$ is a finite-dimensional subspace of $Lip_0(X)$?}

In the present work in Section \ref{sec 2}, we prove that this quotient space is also a dual space and provide an explicit characterization of its predual. 

While searching for a proper closed subspace of $Lip_0(X)$ that contains $X^*$ properly, our attention naturally turns to $Lip_0^{ph}(X)$, the space of positively homogeneous real-valued Lipschitz functions. Interestingly, $Lip_0^{ph}(X)$ turns out to be both a dual space and the preannihilator of a closed subspace of the Lipschitz-free space, satisfying 
$$
X^* \subsetneq Lip_0^{ph}(X) \subsetneq Lip_0(X).
$$
Moreover, we define an appropriate multiplication on it (distinct from the one introduced by the authors in \cite{LAALFSOUMS}), under which  $(Lip_0^{ph}(X), Lip(\cdot))$ also becomes a Banach algebra. It is worth noting that $Lip_0(M)$, equipped with the Lipschitz norm and pointwise multiplication, fails to form a Banach algebra unless $M$ is a bounded metric space. In this sense, the space $Lip_0^{ph}(X)$ offers a slight structural advantage.
We now discuss the structure of the article and the key findings as follows:
\begin{enumerate}
 \item For any Banach space $X$ with dimension greater than $1$, we identify a proper closed infinite-dimensional subspace, namely $Lip_0^{ph}(X)$ (Space of positive homogeneous Lipschitz maps) of 
    $Lip_0(X)$ that contains $X^*$ 
    as a proper closed subspace, and we further show that 
    $\bigslant{Lip_0(X)}{Lip_0^{ph}(X)}$ is a dual space (see corollary \ref{cor lip q lipph}).
 \item It is well known that $Lip_0(\mathbb{R}, \mathbb{R})$ is linear isometrically isomorphic to $L^{\infty}(\mathbb{R})$ (see \cite{IROLFSOCDIDS}). In contrast, we see that the space $Lip_0^{ph}(\mathbb{R}, \mathbb{R})$ is two dimensional. However, once we move to $\mathbb{R}^2$, the space $Lip_0^{ph}(\mathbb{R}^2, \mathbb{R})$ becomes infinite-dimensional, as its predual is linearly isomorphic to $L^1(\mathbb{R})$ (see Example \ref{example}$(1)$). Thus, the imposition of positive homogeneity leads to a remarkable change in the dimensional structure of the corresponding Lipschitz spaces over $\mathbb{R}$. In fact, geometrically, the positively homogeneous Lipschitz maps from 
$\mathbb{R}$ to $\mathbb{R}$ are precisely the piecewise affine maps that may fail to be differentiable at most at a single point, namely the origin.
\item Analogous to the Lipschitz free space, we identify a canonical predual $F^{ph}(X)$ of $Lip_0^{ph}(X)$ and examine its relationship with the Lipschitz free space over $X$ in Proposition \ref{fph-->f}. Furthermore, we show that whenever $\dim(X) > 1$, the space $F^{ph}(X)$ contains a subspace that is linearly isomorphic to $L^1(\mathbb{R})$ (see Proposition \ref{fph cont l1r}). To prove this, we use the McShane-type extension theorem for positive homogeneous Lipschitz maps, proved in Section \ref{sec 4}.
    
\end{enumerate}
To move forward, we recall the following results from the theory of Lipschitz functions, which will be used in the subsequent sections. The canonical predual of $Lip_0(M)$; the Lipschitz free space associated with $M$ is denoted by $F(M)$. It is given by
$$
F(M) = \text{span} \{ \delta_x : x \in M \} \subset Lip_0(M)^*,
$$
where $\delta_x : Lip_0(M) \to \mathbb{R}$ is the linear functional defined by $\delta_x(f) = f(x)$ for all $f \in Lip_0(M)$. A key property of Lipschitz free spaces is their universal linearization of Lipschitz functions. Specifically, for any Lipschitz map $f : M \to N$ between pointed metric spaces such that $f(0_M) = 0_N$, there exists a bounded linear  operator $\Tilde{f} : F(M) \to F(N)$ satisfying $\|\Tilde{f}\| = Lip(f)$, and such that $\Tilde{f} \circ \delta_M = \delta_N \circ f$, where the map $\delta_M : M \to F(M)$ ($x \mapsto \delta_x$) is a nonlinear isometric embedding of a metric space into its Lipschitz free space. Another important property is the universal property of Lipschitz maps.  
Given $f \in Lip_0(M, X)$, there exists a unique bounded linear map $\hat{f} \in L(F(M), X)$ such that 
$$
\hat{f} \circ \delta_M = f
\quad \text{and} \quad
Lip(f) = \|\hat{f}\|.
$$
The nonlinear isometric embedding $\delta_X$ has a linear left inverse $\beta_X : F(X) \to X$, that acts on the basis elements as $\beta_X(\delta_x)=x$. For further details on these concepts, we refer to the important paper by Godefroy and Kalton~\cite{LFBS}, and the monograph by Weaver~\cite{LA2}, which provide a comprehensive study of Lipschitz functions. 

Throughout the text, we shall adapt the following notation. Given $X$ and $Y$ Banach spaces, $X \simeq Y$ stands for $X$ is linearly isomorphic to $Y$. $X\simeq^h Y$ stands for $X$ is linearly homeomorphic to $Y$. $X \cong Y$ stands for $X$ is linear isometrically isomorphic to $Y$. $X \xhookrightarrow{c} Y$ stands for $X$ is linear isomorphic to a complemented subspace of $Y$. The closed unit ball and unit sphere of $X$ are denoted by $B_X$ and $S_X$, respectively.

\section{Quotient of Finite-Dimensional Spaces in the Lipschitz Dual} \label{sec 2}
Let $\mathcal{A}$ be any $n$-dimensional subspace of $Lip_0(X,\mathbb{R})$. Therefore, $\mathcal{A}= span\left\{f_1,f_2,...,f_n\right\}$ for some $n$ linearly independent elements $f_1,f_2,...,f_n \in Lip_0(X)$. The following result provides an explicit identification of the space $ \bigslant{Lip_0(X)}{\mathcal{A}}$.
\begin{proposition}
    Let $f_1,f_2,...,f_n \in Lip_0(X)$ be linearly independent. Then $\bigslant{Lip_0(X)}{span\left\{f_1,f_2,...,f_n \right\}}$ is linear isometrically isomorphic to $\left(\bigcap\limits_{i=1}^{n} \ker(\hat{f_i})\right)^{*}$.
\end{proposition}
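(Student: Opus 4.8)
The plan is to exploit the canonical identification $Lip_0(X) \cong F(X)^*$ together with the standard duality between a closed subspace of a Banach space and a quotient of its dual. By the universal property of Lipschitz maps recalled above, each $f_i$ corresponds to a bounded linear functional $\hat{f_i} \in F(X)^*$ with $Lip(f_i) = \|\hat{f_i}\|$, and the assignment $f \mapsto \hat{f}$ is an isometric isomorphism of $Lip_0(X)$ onto $F(X)^*$. I would first record that, since $f_1,\dots,f_n$ are linearly independent in $Lip_0(X)$, the functionals $\hat{f_1},\dots,\hat{f_n}$ are linearly independent in $F(X)^*$, so this isometry carries $\mathcal{A} := span\{f_1,\dots,f_n\}$ onto the $n$-dimensional subspace $W := span\{\hat{f_1},\dots,\hat{f_n}\} \subseteq F(X)^*$.

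Setting $N := \bigcap_{i=1}^{n}\ker(\hat{f_i})$, I would observe that $N$ is exactly the preannihilator ${}^{\perp}W = \{\mu \in F(X) : \hat{f_i}(\mu)=0,\ 1\le i\le n\}$, a closed subspace of $F(X)$ of codimension $n$. The crucial point is to identify the annihilator $N^{\perp} \subseteq F(X)^*$. Since $W$ is finite-dimensional it is weak*-closed, and for weak*-closed subspaces the biannihilator relation yields $({}^{\perp}W)^{\perp} = W$; hence $N^{\perp} = W$.

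With this in hand the conclusion follows from the textbook duality theorem: for a closed subspace $N$ of a Banach space $E$, the restriction map $g \mapsto g|_N$ induces an isometric isomorphism $E^*/N^{\perp} \cong N^*$, where surjectivity and norm preservation are supplied by Hahn--Banach. Taking $E = F(X)$, so that $E^* = Lip_0(X)$, and using $N^{\perp} = W \cong \mathcal{A}$, this reads
$$
\bigslant{Lip_0(X)}{\mathcal{A}} \cong N^* = \left(\bigcap_{i=1}^{n}\ker(\hat{f_i})\right)^{*},
$$
implemented by the map sending $f + \mathcal{A}$ to the restriction $\hat{f}|_N$.

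The only genuinely delicate step is the identification $N^{\perp} = W$: the inclusion $W \subseteq N^{\perp}$ is immediate, but the reverse inclusion requires $W$ to be weak*-closed in $F(X)^*$, which is precisely where finite-dimensionality of $\mathcal{A}$ enters. Were $\mathcal{A}$ merely closed but infinite-dimensional, one would only get $N^{\perp} = \overline{W}^{\,w^*}$, forcing the quotient to be taken modulo this weak*-closure rather than $\mathcal{A}$ itself. Everything else is a routine application of standard Banach-space duality, so I expect no further obstacles.
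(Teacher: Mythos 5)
Your proposal is correct and takes essentially the same route as the paper: both reduce the claim to showing that the annihilator of $N=\bigcap_{i=1}^{n}\ker(\hat{f_i})$ in $F(X)^*\cong Lip_0(X)$ equals $span\{f_1,\dots,f_n\}$, and then conclude via the standard isometry $E^*/N^{\perp}\cong N^*$ (Rudin, Theorem 4.9). The only cosmetic difference is in the justification of $N^{\perp}=W$: the paper uses the elementary linear-algebra lemma that a functional vanishing on $\bigcap_{i}\ker(\hat{f_i})$ lies in $span\{\hat{f_1},\dots,\hat{f_n}\}$, whereas you invoke weak*-closedness of finite-dimensional subspaces together with the biannihilator relation --- two equivalent ways of carrying out the same step.
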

\begin{proof}
    We consider the closed subspace $M= \bigcap\limits_{i=1}^{n} \ker(\hat{f_i})$ of $F(X)$. As $Lip_0(X)$ and $F(X)$ form a dual pair, it is enough to show that $M^{\perp} = span\left\{f_1,f_2,...,f_n \right\}$. Since $F(X)^*$ is linear isometrically isomorphic to $Lip_0(X)$
    $$M^{\perp}= \left\{g \in Lip_0(X): \gamma(g)=0 \ \forall \ \gamma \in \bigcap\limits_{i=1}^{n} \ker(\hat{f_i})\right\}.$$
    Therefore, $\gamma(g)=0 \ \forall \ \gamma \in \bigcap\limits_{i=1}^{n} \ker(\hat{f_i})$ implies $\bigcap\limits_{i=1}^{n} \ker(\hat{f_i}) \subset \ker(\hat{g})$. Then it follows from a standard technique that $T_g \in span\left\{\hat{f_1},\hat{f_2},...,\hat{f_n} \right\}$. That is. there exists $\alpha_i \in \mathbb{R}$ for $i=1,2,...,n$ such that 
    \begin{eqnarray*}
        &\hat{g}=&\sum\limits_{i=1}^{n} \alpha_i \hat{f_i}\\
        &\iff& \hat{g}(\delta_x)=\sum\limits_{i=1}^{n} \alpha_i \hat{f_i}(\delta_x) \ \ \forall x \in X.\\
        &\iff& g \in span\left\{f_1,f_2,...,f_n \right\}.
    \end{eqnarray*}
    Hence we get $M^{\perp} = span\left\{f_1,f_2,...,f_n \right\}$ and from \cite[Theorem 4.9]{FA} the proof follows.
\end{proof}
\begin{remark}
\begin{enumerate}
    \item Although the above result is proved for Banach spaces, it remains valid when $X$ is replaced by a pointed metric space, and the proof proceeds by the same method.
\item If we consider $X= \mathbb{R}$ and the one dimensional subspace $span\{Id_{\mathbb{R}}\}$, then it follows from the result that $\bigslant{Lip_0(\mathbb{R})}{span\{Id_{\mathbb{R}}\}} \cong \ker(\beta_{\mathbb{R}})^*$, which has been shown as a corollary in \cite[Remark 5.5]{KarnMandal2025}.
\item In general, for any finite-dimensional subspace $\mathcal{A}$ of $Lip_0(X,Y)$, it is not known to the author whether the quotient space $\bigslant{Lip_0(X,Y)}{\mathcal{A}}$ can be identified with a space of linear operators.
\end{enumerate}
    
\end{remark}
 \section{A Proper Closed Subspace of the Lipschitz Dual Properly Containing the Linear Dual}
One of our other goal in this article is to identify a proper closed subspace of $Lip_0(X)$ that contains $X^*$ as a closed subspace. It is well-known that positive homogeneous uniformly continuous maps between Banach spaces are known to be Lipschitz. In seeking a proper closed subspace of $Lip_0(X)$ containing $X^{*}$, the author focused on positively homogeneous Lipschitz maps.
In pursuit of this objective, for any Banach spaces $X$ and $Y$, we consider a subset $S \subset X$ which is is closed under positive scalar multiplication (hence $0 \in S$), equipped with the metric induced by the norm on $X$, that is, $d(x,w) := \|x - w\|$ for all $x, w \in S$. 
We consider 
$$
Lip_0^{ph}(S, Y) := \left\{ f \in Lip_0(S, Y) : f(\alpha x) = \alpha f(x) \ \forall \, \alpha \ge 0, \, x \in X \right\},
$$ 
with the $Lip(\cdot)$ norm. Since any sequence $(f_n)$ in $Lip_0^{ph}(S, Y)$ converges to $f \in Lip_0(S, Y)$ in $Lip(\cdot)$ norm; implies $f_n$ converges to point-wise to $f$ also. It follows that the space $Lip_0^{ph}(S, Y)$ is a closed subspace of $Lip_0(S, Y)$. In particular, $Lip_0^{ph}(X, Y)$ contains $L(X, Y)$ as a closed subspace. Moreover, the inclusions are strict as the map $x \to \|x\|y_0$ is in $Lip_0^{ph}(X, Y) \setminus L(X, Y)$, where as the map $x \to \|x-x_0\|y_0 -\|x_0\|y_0$ is in $Lip_0(X, Y) \setminus Lip_0^{ph}(X, Y)$ for some fixed $x_0 \in X\setminus\{0\}$ and $y_0 \in S_Y$. For $Y=\mathbb{R}$ we denote the space of positive homogeneous real-valued Lipschitz maps on $X$ by $Lip_0^{ph}(X)$. 

Moreover, it is known that on the closed unit ball $ B_{Lip_0(X)}$, the $weak^*$ topology coincides with the topology of pointwise convergence.
It is also straightforward to verify that $ B_{Lip_0^{ph}(X)} $ is closed in $B_{Lip_0(X)}$ with respect to this topology.
Since $B_{Lip_0(X)}$ is compact under the topology of pointwise convergence, its closed subset $B_{Lip_0^{ph}(X)}$ is likewise compact.
Hence, by the Ng--Dixmier theorem~\cite[Theorem~1]{OATOD}, it follows that $Lip_0^{ph}(X)$ is isometrically isomorphic to a dual space.
In analogy with the predual of $Lip_0(X)$, we construct the predual of $Lip_0^{ph}(X)$. To proceed in this direction, we first present the following facts. The proofs follow similar lines to the classical case, while the differing parts can be derived from the hints provided herein.

\begin{facts} \label{facts}
\begin{enumerate}
   \item Recall the pointwise evaluation functional $\delta_x \in Lip_0(X)^{\ast}$. 
By $\delta_x^{ph}$ we denote its restriction to $Lip_0^{ph}(X)$, that is,
$
\delta_x^{ph} := \delta_x\big|_{Lip_0^{ph}(X)}.
$
We then define
$$
F^{ph}(X) := \overline{\operatorname{span}}^{\|\cdot\|}\left\{\delta_x^{ph} : x \in X\right\},
$$
which is a closed subspace of $Lip_0^{ph}(X)^{\ast}$.

\item Analogous to the map $\delta_X : X \to F(X)$ (a nonlinear isometry), the map 
$
\delta_X^{ph} : X \to F^{ph}(X), \qquad x \mapsto \delta_x^{ph},
$
is also a nonlinear isometry. 
However, the proof of this isometry differs slightly from the usual one, and we therefore provide its details below.

Let $x, y \in X$. Then $\|\delta_x^{ph} - \delta_y^{ph}\| = \sup\limits_{f \in B_{Lip_0^{ph}(X)}}|f(x)-f(y)| \leq \|x-y\|$. For the equality, consider the functional (exists due to Hahn-Banach theorem) $g \in S_{X^*} \left(\subset B_{Lip_0^{ph}(X)}\right)$ such that $g(x-y)=\|x-y\|$. Then $\|\delta_x^{ph} - \delta_y^{ph}\| \geq |g(x)-g(y)| = \|x-y\|$. Hence, the proof follows.
\item Since $\delta_X^{ph}$ is the restriction of $\delta_X$ to $Lip_0^{ph}(X)$, it is positive homogeneous—unlike the map $\delta_X$ itself.
\item  For any Banach space $X$, define
    $$\langle \cdot, \cdot \rangle : Lip_{0}^{ph}(X) \times F^{ph}(X) \xrightarrow{} \mathbb{R}$$ given by $$\langle f,\mu \rangle:= \mu(f)$$ for all $f \in Lip_{0}^{ph}(X)$ and $\mu \in F^{ph}(X)$. Then $\langle \cdot, \cdot \rangle$ is bilinear and non-degenerate. Also, it can be easily deduced that 
    $$\sup\limits_{f \in B_{Lip_{0}^{ph}(X, \mathbb{R})}} |\mu(f)| = \|\mu\| \ \ \mbox{and} \ \ \sup\limits_{\mu \in B_{F^{ph}(X)}}|\mu(f)|=Lip(f).$$
\item $Lip_{0}^{ph}(X, Y)$ is linear isometrically isomorphic to $L\left(F^{ph}(X), Y\right)$ via the map $f \mapsto T_f$, where $f$ and $T_f$ satisfy the following commutative diagram.
$$\begin{tikzcd}
& F^{ph}(X) \arrow[dr, "T_f"] & \\
X \arrow[ur, "\delta_X^{ph}"] \arrow[rr, "f"'] & & Y
\end{tikzcd}$$
The proof follows similar to the classical argument of proving $Lip_0(X, Y)$ is linear isometrically isomorphic to $L(F(X), Y)$, relying on the fact that 
$\delta_X^{ph}$ is positive homogeneous.
\item Let $X$ and $Y$ be Banach spaces and suppose $f \in Lip_0^{ph}(X, Y)$.  
Then there exists a unique linear map $C_f : F^{ph}(X) \to F^{ph}(Y)$ such that  
$
C_f \circ \delta_X^{ph} = \delta_Y^{ph} \circ f,$
that is, the following diagram commutes:
 $$\begin{tikzcd}
& F^{ph}(X) \arrow[rr, "C_f"] & & F^{ph}(Y) \\
X \arrow[ur, "\delta_X^{ph}"] \arrow[rr, "f"'] & & Y \arrow[ur, "\delta_Y^{ph}"']
\end{tikzcd}.$$
The proof of it follows similar to \cite[Lemma 2.2]{LFBS}.

\end{enumerate}
\end{facts}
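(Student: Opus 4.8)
The plan is to realize $C_f$ through the universal property already recorded in part $(5)$ of Facts~\ref{facts}, mirroring the classical linearization argument of Godefroy and Kalton. First I would form the composition
$$
g := \delta_Y^{ph} \circ f : X \longrightarrow F^{ph}(Y), \qquad x \longmapsto \delta_{f(x)}^{ph},
$$
and verify that $g$ lies in $Lip_0^{ph}\!\left(X, F^{ph}(Y)\right)$. Since $\delta_Y^{ph}$ is a nonlinear isometry by part $(2)$, one has $\|g(x) - g(x')\| = \|\delta_{f(x)}^{ph} - \delta_{f(x')}^{ph}\| = \|f(x) - f(x')\|$, so $g$ is Lipschitz with $Lip(g) = Lip(f)$; moreover $g(0) = \delta_{f(0)}^{ph} = \delta_0^{ph} = 0$.

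The crucial point is the positive homogeneity of $g$. For $\alpha \ge 0$ and $x \in X$, the positive homogeneity of $f$ gives $g(\alpha x) = \delta_{f(\alpha x)}^{ph} = \delta_{\alpha f(x)}^{ph}$, and then part $(3)$ — that $\delta_Y^{ph}$ is itself positively homogeneous — yields $\delta_{\alpha f(x)}^{ph} = \alpha\, \delta_{f(x)}^{ph} = \alpha\, g(x)$. Hence $g \in Lip_0^{ph}\!\left(X, F^{ph}(Y)\right)$. I expect this to be the only genuinely delicate step, and it is precisely where the restriction to positively homogeneous maps pays off: in the classical free-space setting $\delta_Y$ fails to be positively homogeneous, so this computation has no analogue there, and the entire construction hinges on having replaced $\delta_Y$ by $\delta_Y^{ph}$.

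With $g$ in hand, I would invoke the isometric identification $Lip_0^{ph}(X, Z) \cong L\!\left(F^{ph}(X), Z\right)$ of part $(5)$ with $Z = F^{ph}(Y)$. This produces a bounded linear map $C_f := T_g : F^{ph}(X) \to F^{ph}(Y)$ satisfying $C_f \circ \delta_X^{ph} = g = \delta_Y^{ph} \circ f$, which is exactly the asserted commuting diagram; as a byproduct one records $\|C_f\| = Lip(g) = Lip(f)$.

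Finally, uniqueness follows from density. Any bounded linear map $C : F^{ph}(X) \to F^{ph}(Y)$ with $C \circ \delta_X^{ph} = \delta_Y^{ph} \circ f$ agrees with $C_f$ on every generator $\delta_x^{ph}$, hence on their linear span, and therefore on all of $F^{ph}(X) = \overline{\operatorname{span}}\{\delta_x^{ph} : x \in X\}$ by continuity. Thus $C_f$ is the unique such map, completing the argument.
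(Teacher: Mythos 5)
Your proposal is correct and follows essentially the same route as the paper, which proves item $(6)$ by deferring to the classical linearization argument of Godefroy and Kalton \cite[Lemma 2.2]{LFBS}: namely, linearize $g=\delta_Y^{ph}\circ f$ through the universal property of item $(5)$, with the verification that $g$ is positively homogeneous (via item $(3)$ and the positive homogeneity of $f$) being exactly the modification the positively homogeneous setting requires, and uniqueness following by density of $\operatorname{span}\{\delta_x^{ph}: x\in X\}$ in $F^{ph}(X)$. You have merely written out in full the details the paper leaves to the cited reference, including the correct norm identity $\|C_f\|=Lip(f)$.
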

\subsection{$Lip_{0}^{ph}(X)$ as a quotient space in $Lip_{0}(X)$}

\begin{proposition}
    For any Banach space $X$, consider the closed subspace of $F(X)$
$$
\overline{\operatorname{span}}^{\|\cdot\|}\bigl\{ r\delta_x - \delta_{rx} : r \ge 0, \ x \in X \bigr\}.
$$
Then, the pre-annihilator of $Lip_{0}^{ph}(X)$ is given by
$\ ^{\perp}Lip_{0}^{ph}(X)= \overline{\operatorname{span}}^{\|\cdot\|}\bigl\{ r\delta_x - \delta_{rx} : r \ge 0, \ x \in X \bigr\}.
$
\end{proposition}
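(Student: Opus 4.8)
The plan is to exploit the duality between $F(X)$ and $Lip_0(X) \cong F(X)^{*}$ together with the standard bipolar-type identity $^{\perp}(W^{\perp}) = \overline{W}^{\|\cdot\|}$, valid for any subspace $W$ of a Banach space, applied here with the predual $F(X)$ playing the role of the underlying space and $Lip_0(X)$ that of its dual. Write $N := \overline{\operatorname{span}}^{\|\cdot\|}\{ r\delta_x - \delta_{rx} : r \ge 0,\ x \in X\}$, which is a norm-closed subspace of $F(X)$ by construction. The whole argument then reduces to computing the annihilator $N^{\perp} \subseteq Lip_0(X)$ directly and recognizing it as $Lip_{0}^{ph}(X)$.

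First I would compute $N^{\perp}$. Since the generators $r\delta_x - \delta_{rx}$ span a dense subspace of $N$ and every $f \in Lip_0(X) = F(X)^{*}$ is continuous, a functional $f$ lies in $N^{\perp}$ if and only if it annihilates each generator. Using the pairing recorded in Facts~\ref{facts}$(4)$, this reads $\langle f, r\delta_x - \delta_{rx}\rangle = r f(x) - f(rx) = 0$ for all $r \ge 0$ and $x \in X$, which is precisely the positive homogeneity condition $f(rx) = r f(x)$. Combined with $f \in Lip_0(X)$ this gives $f \in Lip_{0}^{ph}(X)$, while the converse inclusion is immediate from the same computation. Hence $N^{\perp} = Lip_{0}^{ph}(X)$.

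With this identification in hand, I would invoke the standard duality theorem (in the same spirit as \cite[Theorem~4.9]{FA}): for a norm-closed subspace $W$ of a Banach space $Z$ one has $^{\perp}(W^{\perp}) = W$. Applying this with $Z = F(X)$ and $W = N$, which is norm-closed by construction, yields $^{\perp}(N^{\perp}) = N$. Substituting $N^{\perp} = Lip_{0}^{ph}(X)$ gives $^{\perp}Lip_{0}^{ph}(X) = N$, the desired identity.

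I do not anticipate a serious obstacle, since the content of the proof lies entirely in the elementary but decisive observation that annihilating the generators $r\delta_x - \delta_{rx}$ is literally the definition of positive homogeneity. The only points requiring a little care are the continuity/density reduction—ensuring that annihilating the spanning family is equivalent to annihilating all of $N$—and the correct bookkeeping of which space serves as the predual and which as the dual, so that the identity $^{\perp}(W^{\perp}) = \overline{W}^{\|\cdot\|}$ is applied in the proper direction.
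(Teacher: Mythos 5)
Your proof is correct and is in substance the same as the paper's: the paper establishes the nontrivial inclusion by exactly the Hahn--Banach separation argument that underlies the identity $^{\perp}(W^{\perp})=W$ for norm-closed $W$ which you cite, and your computation $N^{\perp}=Lip_{0}^{ph}(X)$ (annihilating the generators $r\delta_x-\delta_{rx}$ is literally positive homogeneity) is recorded in the paper as the remark immediately following the proposition. The only difference is packaging --- you invoke the standard annihilator theorem abstractly where the paper runs the separation argument inline --- so no gap remains.
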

\begin{proof}
    Since every $f \in Lip_{0}^{ph}(X)$ is positive homogeneous and $\ ^{\perp}Lip_{0}^{ph}(X)$ is a closed subspace of $F(X)$, we have 
$\overline{\operatorname{span}}^{\|\cdot\|}\{ r\delta_x - \delta_{rx} : r \ge 0, \ x \in X \} \subset \ ^{\perp}Lip_{0}^{ph}(X)$.

Now, suppose $\gamma \in \ ^{\perp}Lip_{0}^{ph}(X) \setminus \overline{\operatorname{span}}^{\|\cdot\|}\{ r\delta_x - \delta_{rx} : r \ge 0, \ x \in X \}$.
Then, by the Hahn-Banach separation theorem (see \cite[Theorem 3.5]{FA}), there exists $g \in Lip_0(X)$ such that $\gamma(g)=1$ and $\xi(g)=0$ for all $\xi \in \overline{\operatorname{span}}^{\|\cdot\|}\{ r\delta_x - \delta_{rx} : r \ge 0, \ x \in X \}$.
In particular, $(r\delta_x - \delta_{rx})(g)=0$ for all $x \in X$ and $r \ge 0$.
Therefore, $g \in Lip_{0}^{ph}(X)$, which implies $\gamma(g)=0$, a contradiction.
Hence, the claim follows.
\end{proof}
\begin{remark}
   A similar kind of arguments provides that 
$$\overline{\operatorname{span}}^{\|\cdot\|}\{ r\delta_x - \delta_{rx} : r \ge 0, \ x \in X \}^{\perp}= Lip_{0}^{ph}(X).$$ 
\end{remark} 
\begin{corollary} \label{cor lip q lipph}
    From the above facts we have the following :
    $$\bigslant{Lip_0(X)}{Lip_0^{ph}(X)} = \overline{\operatorname{span}}^{\|\cdot\|}\{ r\delta_x - \delta_{rx} : r \ge 0, \ x \in X \}^{\ast}.$$
\end{corollary}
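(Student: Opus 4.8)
The plan is to read this corollary as an immediate consequence of the standard annihilator duality between a Banach space and the dual of a closed subspace of its predual, fed by the Remark that directly precedes it. Write $N := \overline{\operatorname{span}}^{\|\cdot\|}\{ r\delta_x - \delta_{rx} : r \ge 0, \ x \in X \}$, which is by construction a closed subspace of $F(X)$. The ambient framework is that $F(X)$ is the canonical predual of $Lip_0(X)$, so that $F(X)^{\ast} \cong Lip_0(X)$ isometrically, under the pairing $\langle f, \gamma\rangle = \gamma(f)$ for $f \in Lip_0(X)$ and $\gamma \in F(X)$.

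First I would invoke the preceding Remark, which records that $N^{\perp} = Lip_0^{ph}(X)$, the annihilator being taken inside $Lip_0(X) = F(X)^{\ast}$. Concretely, a functional $f \in Lip_0(X)$ annihilates every generator $r\delta_x - \delta_{rx}$ exactly when $rf(x) - f(rx) = 0$ for all $r \ge 0$ and $x \in X$, that is, precisely when $f$ is positively homogeneous; this is the computation that identifies $N^{\perp}$ with $Lip_0^{ph}(X)$.

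Next I would apply the same duality tool already used for the first Proposition of this section, namely \cite[Theorem~4.9]{FA}, in the form: for a closed subspace $N$ of a Banach space $Z$, the restriction map $Z^{\ast} \to N^{\ast}$, $f \mapsto f|_N$, is a surjective linear contraction with kernel $N^{\perp}$, and hence induces an isometric isomorphism $\bigslant{Z^{\ast}}{N^{\perp}} \cong N^{\ast}$ (surjectivity being the Hahn--Banach extension statement). Taking $Z = F(X)$, so that $Z^{\ast} = Lip_0(X)$ and $N^{\perp} = Lip_0^{ph}(X)$, this yields
$$\bigslant{Lip_0(X)}{Lip_0^{ph}(X)} \cong N^{\ast} = \overline{\operatorname{span}}^{\|\cdot\|}\{ r\delta_x - \delta_{rx} : r \ge 0, \ x \in X \}^{\ast},$$
which is exactly the asserted identity.

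I expect no genuine obstacle here: the substantive content is already carried by the Remark, and what remains is a textbook application of the $N^{\ast} \cong Z^{\ast}/N^{\perp}$ duality—no appeal to compactness or the Ng--Dixmier theorem is needed at this stage. The only point demanding care is the bookkeeping of directions: one must use the predual $F(X)$ as the ambient space $Z$, rather than $Lip_0(X)$ itself, so that $Lip_0(X)$ plays the role of the dual $Z^{\ast}$. This is precisely why it is the quotient of the dual, and not of the predual, that appears on the left-hand side.
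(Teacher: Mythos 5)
Your proposal is correct and follows exactly the route the paper intends: the corollary is stated without proof as a consequence of the preceding Remark (that $\overline{\operatorname{span}}^{\|\cdot\|}\{r\delta_x - \delta_{rx}\}^{\perp} = Lip_0^{ph}(X)$) together with the standard duality $\bigslant{Z^{\ast}}{N^{\perp}} \cong N^{\ast}$ from \cite[Theorem~4.9]{FA}, the same tool already invoked in Section~\ref{sec 2}. Your explicit verification that annihilating the generators is equivalent to positive homogeneity, and your observation that the ambient space must be the predual $F(X)$ rather than $Lip_0(X)$, are precisely the bookkeeping the paper leaves implicit.
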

The next result provides a classification of all positive homogeneous real-valued Lipschitz maps on a Banach space by relating them to real-valued Lipschitz maps whose domains are suitably restricted. On $S_X \cup \{0\}$ we consider the restricted metric induced by the norm $\|\cdot\|$ on the Banach space $X$.
\begin{proposition} \label{lph-->lsx}
   Let $X$ and $Z$ be a Banach spaces. Then $\left(Lip_0^{ph}(X, Z), Lip(\cdot)\right)$ is linearly homeomorphic to $\left(Lip_0(S_X \cup \{0\}, Z), Lip(\cdot)\right)$. 
\end{proposition}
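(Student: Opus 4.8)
The plan is to exhibit the homeomorphism concretely as the restriction map together with its inverse, the positively homogeneous extension. Since every positively homogeneous map is completely determined by its values on the unit sphere, the natural candidate is the restriction operator
\[
R : Lip_0^{ph}(X,Z) \longrightarrow Lip_0(S_X \cup \{0\}, Z), \qquad R(f) = f\big|_{S_X \cup \{0\}}.
\]
This is evidently linear, and because $S_X \cup \{0\}$ carries the metric inherited from $X$, restricting a Lipschitz map only decreases its Lipschitz constant, so $Lip(R(f)) \le Lip(f)$; together with $f(0)=0$ this shows $R$ is a well-defined contraction. The first task is therefore purely formal, and I would dispatch it in a line.

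Next I would construct the inverse by the extension operator
\[
E : Lip_0(S_X \cup \{0\}, Z) \longrightarrow Lip_0^{ph}(X,Z), \qquad E(g)(x) = \|x\|\, g\!\left(\tfrac{x}{\|x\|}\right) \ (x \neq 0), \quad E(g)(0)=0.
\]
Writing $\hat x = x/\|x\|$, positive homogeneity of $E(g)$ is immediate since $\widehat{\alpha x} = \hat x$ for $\alpha > 0$, giving $E(g)(\alpha x) = \alpha E(g)(x)$, with the case $\alpha = 0$ trivial. A direct check then yields $R \circ E = \mathrm{id}$ (evaluating at unit vectors, where $\|x\|=1$) and $E \circ R = \mathrm{id}$ (using precisely the positive homogeneity $f(x) = \|x\| f(\hat x)$), so $E = R^{-1}$ as linear maps.

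The crux, and the step I expect to be the only genuine obstacle, is the quantitative estimate $Lip(E(g)) \le C\, Lip(g)$ for a universal constant $C$, which is what promotes the algebraic bijection to a homeomorphism (and explains why one cannot expect an isometry here). For nonzero $x,y$ I would use the splitting
\begin{equation*}
E(g)(x) - E(g)(y) = \|x\|\bigl(g(\hat x) - g(\hat y)\bigr) + (\|x\| - \|y\|)\, g(\hat y).
\end{equation*}
The second summand is bounded by $Lip(g)\,\|x-y\|$ via $\|g(\hat y)\| = \|g(\hat y) - g(0)\| \le Lip(g)$ and $\bigl|\,\|x\|-\|y\|\,\bigr| \le \|x-y\|$. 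The first summand is controlled through the elementary normalization inequality $\|x\|\,\|\hat x - \hat y\| \le 2\|x-y\|$, obtained by writing $\hat x - \hat y = \frac{x-y}{\|x\|} + y\bigl(\frac{1}{\|x\|} - \frac{1}{\|y\|}\bigr)$ and estimating $\|y\|\,\bigl|\frac{1}{\|x\|}-\frac{1}{\|y\|}\bigr| = \frac{|\,\|x\|-\|y\|\,|}{\|x\|} \le \frac{\|x-y\|}{\|x\|}$. The boundary case $y=0$ is immediate from $\|E(g)(x)\| = \|x\|\,\|g(\hat x)\| \le Lip(g)\,\|x\|$.

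Combining these bounds gives $Lip(E(g)) \le 3\, Lip(g)$, so $R$ is a linear bijection with $\|R\| \le 1$ and $\|R^{-1}\| = \|E\| \le 3$, hence a linear homeomorphism. The whole argument is elementary once the vector-normalization inequality is in hand; that inequality, rather than any abstract functional-analytic input, is where the real work lies.
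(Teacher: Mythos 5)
Your proposal is correct and follows essentially the same route as the paper's proof: the restriction map (the paper's $\Lambda$, your $R$), the explicit positively homogeneous extension $g \mapsto \|x\|\,g(x/\|x\|)$ as inverse, the identical splitting of $E(g)(x)-E(g)(y)$, and the same normalization inequality $\|x\|\left\|\frac{x}{\|x\|}-\frac{y}{\|y\|}\right\| \le 2\|x-y\|$ yielding the bound $Lip(E(g)) \le 3\,Lip(g)$. The only difference is cosmetic: you prove that normalization inequality explicitly, where the paper dismisses it as a ``standard technique.''
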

\begin{proof}
    We define $\Lambda : Lip_0^{ph}(X, Z) \to Lip_0(S_X \cup \{0\}, Z)$ given by $\Lambda(f)= f\big|_{S_X \cup \{0\}}$. Clearly $\Lambda$ is a linear contraction. Using the positive homogeneity of the functions we can prove $ker(\Lambda)=\{0\}$. 

    Let $g \in Lip_0(S_X \cup \{0\}, Z)$. Put 
    $$h(x) = \begin{cases}
		\|x\|g\left(\frac{x}{\|x\|}\right), &~ \mbox{if} ~ x \ne 0 \\ 
		0, &~ \mbox{if} x=0.
	\end{cases}$$
    Then $h$ is positive homogeneous. We show that $h$ is Lipschitz also.
    Suppose $x, y \in X\setminus\{0\}$. Then 
    \begin{eqnarray*}
        |h(x)-h(y)| &=& \left|\|x\|g\left(\frac{x}{\|x\|}\right)- \|x\|g\left(\frac{y}{\|y\|}\right) + (\|x\|-\|y\|)g\left(\frac{y}{\|y\|}\right)\right|\\
        &\leq& Lip(g)\|x-y\| + Lip(g)\|x\| \left\|\frac{x}{\|x\|}-\frac{y}{\|y\|}\right\|.
    \end{eqnarray*}
    Again, by a standard technique, it follows that $\|x\| \left\|\frac{x}{\|x\|}-\frac{y}{\|y\|}\right\| \leq 2\|x-y\|$. Therefore, $|h(x)-h(y)| \leq 3 Lip(g) \|x-y\|$. The case is considerably simpler when either $x$ or $y$ equals $0$. Furthermore, it is straightforward to see that $\Lambda(h)=g$. This completes the proof.
\end{proof}
\begin{remark}
    For $Z=\mathbb{R}$ with dimension of $X$ greater equal to $2$, it follows from \cite[Theorem 3.2]{OTSOLFS} and the above proposition that $\ell_{\infty} \xhookrightarrow{} Lip_0^{ph}(X)$.
\end{remark}
This linear homeomorphism naturally induces a corresponding linear homeomorphism between their predual spaces, which we shall establish in the following proposition. Since the proof follows a similar way using this map $\Lambda$, we only provide the outline of it.
\begin{proposition} \label{fph-->f}
    For any Banach space $X$, $F(S_X)$ and $F^{ph}(X)$ are linearly homeomorphic.
\end{proposition}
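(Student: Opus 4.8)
The plan is to \emph{dualize} the linear homeomorphism $\Lambda$ from Proposition~\ref{lph-->lsx}. Recall that $F^{ph}(X)$ is a predual of $Lip_0^{ph}(X)$ (Facts~\ref{facts}), while the canonical predual of $Lip_0(S_X\cup\{0\})$ is its Lipschitz-free space; here I read $F(S_X)$ as the Lipschitz-free space over the pointed metric space $S_X\cup\{0\}$, consistent with the previous proposition. The strategy is to produce a bounded linear map $\lambda : F(S_X\cup\{0\}) \to F^{ph}(X)$ whose adjoint is exactly $\Lambda$, and then use the standard duality dictionary (dense range $\leftrightarrow$ injective adjoint, bounded below $\leftrightarrow$ surjective adjoint) to transfer the homeomorphism property from $\lambda^{*}$ back to $\lambda$.

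First I would construct $\lambda$. The map $S_X\cup\{0\}\to F^{ph}(X)$, $x\mapsto \delta_x^{ph}$, is the restriction of the nonlinear isometry $\delta_X^{ph}$ (Facts~\ref{facts}(2)) and sends the base point $0$ to $0$, hence is a base-point-preserving isometry into the Banach space $F^{ph}(X)$. By the universal property of the Lipschitz-free space there is a unique bounded linear $\lambda$ with $\lambda(\delta_x)=\delta_x^{ph}$ for all $x\in S_X\cup\{0\}$ and $\|\lambda\|=1$. I would then identify its adjoint using the pairings of Facts~\ref{facts}(4): for $f\in Lip_0^{ph}(X)$ and $x\in S_X\cup\{0\}$,
$$
(\lambda^{*}f)(x)=\langle \lambda^{*}f,\delta_x\rangle=\langle f,\lambda\delta_x\rangle=\langle f,\delta_x^{ph}\rangle=f(x),
$$
so that $\lambda^{*}f=f\big|_{S_X\cup\{0\}}=\Lambda(f)$, i.e. $\lambda^{*}=\Lambda$. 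Since Proposition~\ref{lph-->lsx} shows $\Lambda$ is a linear homeomorphism (a bounded bijection with bounded inverse), $\lambda^{*}$ is an isomorphism of Banach spaces.

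Finally I would conclude via duality. Because $\lambda^{*}$ is injective, $\lambda$ has dense range; because $\lambda^{*}$ is surjective, the closed-range theorem gives that $\lambda$ is bounded below, that is, injective with closed range. Dense range together with closed range yields surjectivity, so $\lambda$ is a bounded linear bijection that is bounded below, hence a linear homeomorphism onto $F^{ph}(X)$. (One can also see the dense range directly from positive homogeneity: $\delta_x^{ph}=\|x\|\,\delta_{x/\|x\|}^{ph}$ shows $\operatorname{span}\{\delta_x^{ph}:x\in X\}=\operatorname{span}\{\delta_u^{ph}:u\in S_X\cup\{0\}\}$ lies in the range of $\lambda$.)

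The routine parts are the universal-property construction of $\lambda$ and the adjoint computation $\lambda^{*}=\Lambda$. The step that needs genuine care is the last one: it is \emph{not} enough to note that $\lambda^{*}$ is bijective, since a bounded bijection between the preduals need not a priori have closed range. The correct leverage is the stronger implication ``$\lambda^{*}$ surjective $\Rightarrow$ $\lambda$ bounded below,'' which is exactly where the closed-range theorem enters and forces $\lambda$ to be invertible with bounded inverse. A secondary point to keep consistent throughout is the treatment of base points, namely the reading of $F(S_X)$ as $F(S_X\cup\{0\})$ in line with Proposition~\ref{lph-->lsx}.
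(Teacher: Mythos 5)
Your proof is correct, and while your forward map coincides with the paper's, you establish invertibility by a genuinely different mechanism. The paper defines $\Theta\bigl(\sum_{i}\alpha_i\delta_{x_i}\bigr)=\sum_{i}\alpha_i\delta_{x_i}^{ph}$ on $\operatorname{span}\{\delta_x : x\in S_X\}$ and extends by density — exactly the map your universal-property construction produces — but then it proves invertibility by explicitly writing down the candidate inverse $\Phi$ on $\operatorname{span}\{\delta_x^{ph}: x\in S_X\}$, bounded by the constant $3$ coming from the extension estimate inside Proposition~\ref{lph-->lsx}, and extending by density. You instead identify $\lambda^{*}=\Lambda$ under the dualities $F(S_X\cup\{0\})^{*}\cong Lip_0(S_X\cup\{0\})$ and $F^{ph}(X)^{*}\cong Lip_0^{ph}(X)$ (the latter is Facts~\ref{facts}(5) with $Y=\mathbb{R}$, which you are entitled to cite) and descend invertibility from the adjoint via the closed range theorem. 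Your route buys something real: the paper's definition of $\Phi$ on finite combinations has a well-definedness point it glosses over — one must know that $\sum_i\alpha_i\delta_{y_i}^{ph}=0$ in $F^{ph}(X)$ forces $\sum_i\alpha_i\delta_{y_i}=0$ in $F(S_X)$, which itself requires testing against norm-preserving extensions of Lipschitz functions on the sphere — whereas you never define an inverse on a dense subspace at all, and you correctly flag that mere bijectivity of $\lambda^{*}$ is not enough, the operative implication being that a surjective adjoint forces $\lambda$ to be bounded below. What the paper's approach buys in exchange is elementarity (no closed range theorem, no dual identification beyond the pairing) and explicit constants, though your version recovers the same bounds since $\|\lambda^{-1}\|=\|(\lambda^{*})^{-1}\|=\|\Lambda^{-1}\|\le 3$; your direct density observation $\delta_x^{ph}=\|x\|\,\delta_{x/\|x\|}^{ph}$ is also a clean shortcut the paper does not state explicitly but implicitly relies on. Finally, your reading of $F(S_X)$ as $F(S_X\cup\{0\})$ with base point $0$ is the correct one and is consistent with the paper's own usage (compare Lemma~\ref{lemma codim}, where $\overline{\operatorname{span}}^{\|\cdot\|}\{\delta_x : x\in S_X\}$ plays this role).
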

\begin{proof}
    Let us define the map $\Theta: span\{\delta_x : x \in S_X\} \to F^{ph}(X)$ given by $\Theta \left(\sum\limits_{i=1}^{n} \alpha_i \delta_{x_i}\right)= \sum\limits_{i=1}^{n} \alpha_i \delta_{x_i}^{ph}$. Then the linearity of $\Theta$ follows from the definition and boundedness (contraction) holds using the map $\Lambda$ from the above proposition. Since $\Theta$ is continuous and $F^{ph}(X)$ is complete $\Theta$ can be extended continuously to  $F(S_X)$. 

    Again $\Phi: span\{\delta_x^{ph} : x \in S_X \} \to F(S_X)$ given by $\Phi \left(\sum\limits_{i=1}^{n} \alpha_i \delta_{y_i}^{ph}\right)= \sum\limits_{i=1}^{n} \alpha_i \delta_{y_i}^{ph}$ and sends $0$ to $0$. Then $\Phi$ is linear bounded (by the constant $3$), can be extended to $F^{ph}(X)$. Also, immediately it follows that $\Phi$ is the inverse of $\Theta$. This completes the proof.
\end{proof}
We work out a couple of examples for a better understanding of the space. 
\begin{example}
    Consider $X=\mathbb{R}$. Then $F(S_{\mathbb{R}})= F(\{\pm 1\})$ and $F^{ph}(\mathbb{R})$ are linearly homeomorphic. Therefore, $F^{ph}(\mathbb{R})$ is linearly homeomorphic to $\mathbb{R}^2$. Hence,  $Lip_0^{ph}(\mathbb{R}, \mathbb{R})$ is also linearly homeomorphic to $\mathbb{R}^2$. However, independently, we show that $Lip_0^{ph}(\mathbb{R}, \mathbb{R})$ is linear isometrically isomorphic to $(\mathbb{R}^2, \|\cdot\|_{\infty})$.

    For any $f \in Lip_0^{ph}(\mathbb{R}, \mathbb{R})$ we have 
    $$f(x) = \begin{cases}
		xf(1), &~ \mbox{if} ~ x \geq 0 \\ 
		-xf(-1), &~ \mbox{if}~ x<0.
	\end{cases}$$
    We define the following surjective linear map $\xi: Lip_0^{ph}(\mathbb{R}, \mathbb{R}) \to (\mathbb{R}^2, \|\cdot\|_{\infty})$ given by $\xi(f)=(f(1),f(-1))$. Clearly, $\sup\limits_{x,y \in [0, \infty), x \ne y} \frac{|f(x)-f(y)|}{|x-y|} = |f(1)|$ and $\sup\limits_{x,y \in ( -\infty, 0], x \ne y} \frac{|f(x)-f(y)|}{|x-y|} = |f(-1)|$.

    Suppose $x >0$ and $y=-t$ for some $t>0$. Then $f(x)-f(y)= xf(1)-tf(-1)$, and $|x-y|=x+t >0$. Now 
    \begin{eqnarray*}
        \sup\limits_{x>0,y<0} \frac{|f(x)-f(y)|}{|x-y|} &=& \sup\limits_{x,t>0} \frac{|xf(1)-tf(-1)|}{x+t}\\
        &=& \sup\limits_{x,t>0} \left|\frac{x}{x+t} f(1) + \frac{t}{x+t} (-f(-1))\right|\\
        &=& max\{|f(1
        )|,|f(-1)|\}
    \end{eqnarray*}
    Combining all the cases, we have $Lip(f)=\|(f(1),f(-1))\|_{\infty}$ and hence the isometry follows. 
\end{example}
In addition, it can be easily deduced that $Lip_0^{ph}([0,\infty), \mathbb{R})$ is one dimensional.
\begin{example} \label{example}
\begin{enumerate}
    \item From the proposition \ref{fph-->f} it follows that $F(S_{\mathbb{R}^2}) \simeq^h F^{ph}(\mathbb{R}^2)$. Moreover, from \cite{LFSITTISAGA} and \cite{ASOLFBS} we know that $F(S_{\mathbb{R}^2}) \simeq F(\mathbb{R}) \cong L^{1}(\mathbb{R})$. Therfore, we get $F^{ph}(\mathbb{R}^2)$ is linearly isomorphic to $L^1(\mathbb{R})$ and hence $Lip_0^{ph}(\mathbb{R}^2, \mathbb{R})$ is infinite dimensional.
\item In fact, later on we prove that for any normed linear space $X$ with $dim(X) \geq 2$($'dim'$ stands for dimension), $Lip_0^{ph}(X)$ is of infinite dimension. 
\end{enumerate}
\end{example}
\subsection{$Lip_0^{ph}(X, \mathbb{R})$ as a Banach algebra}
Here, we study the Banach space $Lip_0^{ph}(X, \mathbb{R})$ as an algebra over an arbitrary Banach space $X$. When the pointed metric space $(M, d, 0)$ is bounded, the space $\mathrm{Lip}_0(M)$ constitutes a Banach algebra in a weak sense; that is, there exists a constant $C > 1$ such that $Lip(fg) \leq C Lip(f) Lip(g)$ for all $f, g \in Lip_0(M)$. In fact, one may choose $C = 2\max\limits_{x \in M} d(x, 0)$. Such structures are referred to as \emph{Gelfand algebras} in \cite{LA2}. It is worth noting that in the usual definition of a Banach algebra, the above inequality is required to hold with $C = 1$. As discussed in \cite[Chapter~7]{LA2}, when $M$ is bounded, one can introduce an equivalent submultiplicative norm on $Lip_0(M)$, making it a genuine Banach algebra.
More recently, Fernando Albiac \textit{et al.} in \cite{LAALFSOUMS} introduced, for arbitrary metric spaces $M$, an explicit modification of the pointwise multiplication on $Lip_0(M)$ defined by
$f \odot g(x) := \dfrac{f(x)g(x)}{3d(x,0)}, \quad x \in M \setminus \{0\},$
for $f, g \in Lip_0(M)$, which turns $Lip_0(M)$, equipped with its standard norm, into a Banach algebra. In particular, $(Lip_0(X),Lip(\cdot))$ becomes a Banach algebra under this modified pointwise multiplication. Consequently, since $Lip_0^{ph}(X)$ is a closed subspace of $Lip_0(X)$, it forms a Banach subalgebra. Moreover, we introduce another modification—obtained by multiplying the usual pointwise product by a factor—defined via the map $\Lambda$ as in Proposition~\ref{lph-->lsx}, with respect to which $(Lip_0^{ph}(X), Lip(\cdot))$ itself becomes a Banach algebra.

Since $S_X \cup \{0\}$ is bounded the space $Lip_0(S_X \cup \{0\}, \mathbb{R})$ is closed under usual pointwise multiplication.
Hence, for any $f,g \in Lip_0^{ph}(X)$, it follows that $\Lambda(f)\Lambda(g) \in Lip_0(S_X \cup \{0\}, \mathbb{R})$. We now define 
\begin{eqnarray*}
    f\odot g(x)&:=& \frac{1}{5} \Lambda^{-1}\left(\Lambda(f)\Lambda(g)\right)(x)\\
     &=& \begin{cases}
		\frac{1}{5}\|x\|f\left(\frac{x}{\|x\|}\right)g\left(\frac{x}{\|x\|}\right), &~ \mbox{if} ~ x \neq 0 \\ 
		0, &~ \mbox{if}~ x=0.
	\end{cases}
\end{eqnarray*}
Directly, from the definition $f \odot g \in \mathrm{Lip}_0^{ph}(X)$.
 It only remains to show that $Lip(f \odot g) \leq Lip(f)Lip(g)$. Suppose $x,y \in X \setminus\{0\}$. Now
 \begin{eqnarray*}
     \left|\|x\|f\left(\frac{x}{\|x\|}\right)g\left(\frac{x}{\|x\|}\right)-\|y\|f\left(\frac{y}{\|y\|}\right)g\left(\frac{y}{\|y\|}\right)\right| &\leq& \left|\|x\|\left(f\left(\frac{x}{\|x\|}\right)-f\left(\frac{y}{\|y\|}\right)\right)g\left(\frac{x}{\|x\|}\right)\right| \\
     &+& \left|\|x\|f\left(\frac{y}{\|y\|}\right)\left(g\left(\frac{x}{\|x\|}\right) - g\left(\frac{y}{\|y\|}\right)\right)\right|\\
     &+& \left|(\|x\|-\|y\|)f\left(\frac{y}{\|y\|}\right)g\left(\frac{y}{\|y\|}\right)\right|\\
     &\leq& Lip(f)Lip(g)\|x\|\left|\frac{x}{\|x\|}-\frac{y}{\|y\|}\right| \\
     &+& Lip(f)Lip(g)\|x\|\left|\frac{x}{\|x\|}-\frac{y}{\|y\|}\right| + Lip(f)Lip(g)\|x-y\|\\
     &\leq& 5 Lip(f)Lip(g)\|x-y\|.
 \end{eqnarray*}
Further, for $x \neq 0$ and $y=0$ we get $|f\odot g(x)| \leq \frac{1}{5}Lip(f)Lip(g)\|x\|$. Hence, we have $Lip(f \odot g) \leq Lip(f)Lip(g)$.
\subsection{$F^{ph}(X)$ as a complemented copy in $F(X)\oplus \mathbb{R}$}
We record at this point that Benyamini and Sternfeld proved in \cite{SIIDNSALC} that there exists a Lipschitz retraction, say $R$, from $B_X$ onto $S_X$ whenever $X$ is an infinite dimensional normed linear space. Now we consider the following Lipschitz projection $G: X \to B_X$ given by 
$G(x)= \begin{cases}
		\frac{x}{\|x\|}, &~ \mbox{if} ~ \|x\| \geq 1 \\ 
		x, &~ \mbox{if}~ \|x\| \leq 1.
	\end{cases}$ Then $R \circ G: X \to S_X$ is a Lipschitz contraction. Note that $R \circ G(0)=R(0) \in S_X$ and hence $R \circ R(0)=R(0)$.
\begin{lemma} \label{lemma comp}
  $\overline{span}^{\|\cdot\|}\left\{\delta_x -\delta_{R(0)}: x \in S_X\right\}$ is linearly complemented in $F(X)$.  
\end{lemma}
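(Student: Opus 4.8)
The plan is to manufacture a bounded linear projection of $F(X)$ onto the subspace $Z := \overline{\operatorname{span}}^{\|\cdot\|}\{\delta_x - \delta_{R(0)} : x \in S_X\}$ directly out of the Lipschitz contraction $R \circ G$, using the universal linearization property of the Lipschitz-free space. Write $x_0 := R(0) \in S_X$ and $\phi := R \circ G : X \to S_X$. Recall that $\phi$ is Lipschitz, that $\phi(X) \subseteq S_X$, and that $\phi$ fixes $S_X$ pointwise: for $s \in S_X$ one has $G(s) = s$, whence $\phi(s) = R(s) = s$; in particular $\phi(x_0) = x_0$, while $\phi(0) = R(0) = x_0$.

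First I would define $\Psi : X \to F(X)$ by $\Psi(x) := \delta_{\phi(x)} - \delta_{x_0}$. The role of the correction term $-\delta_{x_0}$ is to repair the basepoint mismatch coming from $\phi(0) = x_0 \ne 0$: indeed $\Psi(0) = \delta_{x_0} - \delta_{x_0} = 0$, so $\Psi$ is basepoint-preserving. Moreover, since $\delta_X$ is an isometry, $\|\Psi(x) - \Psi(y)\|_{F(X)} = \|\delta_{\phi(x)} - \delta_{\phi(y)}\|_{F(X)} = \|\phi(x)-\phi(y)\|_X$, so $\Psi$ is Lipschitz with $Lip(\Psi) = Lip(\phi)$, and therefore $\Psi \in Lip_0(X, F(X))$.

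Next I would invoke the universal property of $F(X)$ to linearize $\Psi$: there is a unique bounded linear map $\hat{\Psi} : F(X) \to F(X)$ with $\hat{\Psi} \circ \delta_X = \Psi$ and $\|\hat{\Psi}\| = Lip(\Psi)$, explicitly $\hat{\Psi}(\delta_x) = \delta_{\phi(x)} - \delta_{x_0}$ for all $x \in X$. Since $\phi(x) \in S_X$ for every $x$, each generator $\hat{\Psi}(\delta_x)$ lies in $Z$, so by linearity and continuity $\operatorname{ran}\hat{\Psi} \subseteq Z$. It then remains to check that $\hat{\Psi}$ restricts to the identity on $Z$, which is the crux; for $s \in S_X$ I would compute, using $\phi(s) = s$ and $\phi(x_0) = x_0$,
\[
\hat{\Psi}(\delta_s - \delta_{x_0}) = (\delta_{\phi(s)} - \delta_{x_0}) - (\delta_{\phi(x_0)} - \delta_{x_0}) = (\delta_s - \delta_{x_0}) - 0 = \delta_s - \delta_{x_0}.
\]
Thus $\hat{\Psi}$ fixes every generator of $Z$, so $\hat{\Psi}|_Z = \mathrm{id}_Z$; together with $\operatorname{ran}\hat{\Psi} \subseteq Z$ this forces $\operatorname{ran}\hat{\Psi} = Z$ and $\hat{\Psi}^2 = \hat{\Psi}$. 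Hence $\hat{\Psi}$ is a bounded (in fact norm-one) linear projection of $F(X)$ onto $Z$, establishing that $Z$ is complemented.

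I expect the only genuinely delicate point to be the bookkeeping around the basepoint: because the natural Lipschitz self-map $\phi$ does not preserve $0$, one cannot linearize $\delta_X \circ \phi$ directly, and the shift $-\delta_{x_0}$—which is exactly what already appears in the definition of $Z$—is what makes both the basepoint condition $\Psi(0)=0$ and the identity-on-$Z$ property hold at once. Everything else is a routine application of the universal property together with the fact that the retraction $R$ fixes $S_X$ pointwise.
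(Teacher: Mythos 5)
Your proof is essentially identical to the paper's: the paper defines the very same map $\xi(x)=\delta_{R\circ G(x)}-\delta_{R(0)}$, notes it lies in $Lip_0\left(X, \overline{span}^{\|\cdot\|}\left\{\delta_x-\delta_{R(0)}:x\in S_X\right\}\right)$, linearizes it via the universal property, and verifies that the linearization fixes each generator $\delta_x-\delta_{R(0)}$ exactly as you do. The only slip is your parenthetical ``in fact norm-one'': the projection's norm is $Lip(R\circ G)$, and the Benyamini--Sternfeld retraction $R$ is not nonexpansive, so this constant is in general strictly larger than $1$ --- a harmless point, since boundedness is all the complementation argument needs.
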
 
\begin{proof}
    Let us define the map $\xi: X \to \overline{span}^{\|\cdot\|}\left\{\delta_x -\delta_{R(0)}: x \in S_X\right\}$ given by $$\xi(x)= \delta_{R \circ G(x)} - \delta_{R(0)}.$$
    Then it follows immediately that $\xi \in Lip_0\left(X, \overline{span}^{\|\cdot\|}\left\{\delta_x -\delta_{R(0)}: x \in S_X\right\}\right)$. Then the linearization $\hat{\xi}$ of $\xi$ will work as the bounded linear projection from $F(X)$ onto $\overline{span}^{\|\cdot\|}\left\{\delta_x -\delta_{R(0)}: x \in S_X\right\}$, as for any $x \in S_X$, $\hat{\xi}(\delta_x - \delta_{R(0)})= \xi(x) - \xi(R(0))= \delta_x - \delta_{R(0)}$. 

Thus we have $\overline{span}^{\|\cdot\|}\left\{\delta_x -\delta_{R(0)}: x \in S_X\right\} \xhookrightarrow{c} F(X)$.
\end{proof}
\begin{lemma} \label{lemma codim}
    $\overline{span}^{\|\cdot\|}\left\{\delta_x -\delta_{R(0)}: x \in S_X\right\}$ is a closed co-dimension one subspace of $F(S_X)$.
\end{lemma}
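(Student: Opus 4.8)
The plan is to realize $W := \overline{span}^{\|\cdot\|}\{\delta_x - \delta_{R(0)} : x \in S_X\}$ as the kernel of a single nonzero bounded functional on $F(S_X)$; once such a $\phi$ is produced with $\ker\phi = W$, the lemma is immediate, since the kernel of a nonzero continuous functional is a closed subspace of codimension one. Here I read $F(S_X)$ as the Lipschitz-free space of the pointed metric space $S_X \cup \{0\}$ with basepoint $0$ (consistent with Proposition \ref{fph-->f}, and forced by $R(0) \in S_X$ being a genuinely nonzero atom), so that its dual is $Lip_0(S_X \cup \{0\})$ under the pairing $\langle f, \delta_x\rangle = f(x)$.

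First I would construct the functional. Define $f : S_X \cup \{0\} \to \mathbb{R}$ by $f(x) = 1$ for $x \in S_X$ and $f(0) = 0$. Because every point of $S_X$ lies at distance exactly $1$ from the basepoint and $f$ is constant on $S_X$, a direct estimate gives $Lip(f) = 1$, so $f \in Lip_0(S_X \cup \{0\})$ and induces a bounded functional $\phi$ on $F(S_X)$ with $\phi(\delta_x) = 1$ for all $x \in S_X$. Since $\phi(\delta_x - \delta_{R(0)}) = 1 - 1 = 0$ for every $x \in S_X$ and $\phi$ is continuous, the closed span $W$ is contained in $\ker\phi$.

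For the reverse inclusion I would use an approximation-and-correction argument. Write $p := R(0)$ and take $\mu \in \ker\phi$, approximated in norm by finitely supported elements $\mu_n = \sum_i \alpha_i^{(n)} \delta_{x_i^{(n)}}$ with $x_i^{(n)} \in S_X$. Then $\mu_n - \phi(\mu_n)\,\delta_p = \sum_i \alpha_i^{(n)}(\delta_{x_i^{(n)}} - \delta_p) \in W$, while $\phi(\mu_n) \to \phi(\mu) = 0$ forces $\phi(\mu_n)\,\delta_p \to 0$; hence $\mu_n - \phi(\mu_n)\,\delta_p \to \mu$, and closedness of $W$ yields $\mu \in W$. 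Combining the two inclusions gives $W = \ker\phi$, and as $\phi \neq 0$ (for instance $\phi(\delta_p) = 1$) the subspace $W$ is closed of codimension one in $F(S_X)$.

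The only genuinely delicate point is verifying that the indicator-type function $f$ is Lipschitz on $S_X \cup \{0\}$; this is exactly where the geometry enters, since it rests on all sphere points sharing the common distance $1$ to $0$ (if $0$ were replaced by an interior point of $B_X$ this would fail). The codimension reduction is then routine — the conceptual content is simply the recognition that membership in $W$ is obstructed by precisely one scalar, the ``total mass'' recorded by $\phi$.
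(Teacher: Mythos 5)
Your proof is correct and takes essentially the same approach as the paper: your functional $\phi$, induced by the indicator function $f = 1$ on $S_X$, $f(0)=0$ (which is exactly the restriction to $S_X \cup \{0\}$ of the paper's witness $g(x)=\min\{1,\|x\|\}$), coincides with the paper's ``sum of the coefficients'' functional $Q$, and both arguments conclude by identifying $\overline{span}^{\|\cdot\|}\left\{\delta_x-\delta_{R(0)} : x \in S_X\right\}$ with $\ker Q$. Your explicit approximation-and-correction step $\mu_n - \phi(\mu_n)\,\delta_p \to \mu$ simply spells out what the paper compresses into ``by density,'' so your write-up is, if anything, slightly more complete on that point.
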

\begin{proof}
    Let us define the map $Q : span\left\{\delta_x : x \in S_X\right\} \to \mathbb{R}$ given by $$Q\left(\sum\limits_{i=1}^{n}\alpha_i \delta_{x_i}\right)=\sum\limits_{i=1}^{n} \alpha_i.$$
    Clearly $Q$ is linear. For the boundedness of $Q$, consider $g(x)=min\{1, \|x\|\}$ for all $x \in X$. Then $g \in B_{Lip_0(X)}$ and
    $$\left\|\sum\limits_{i=1}^{n}\alpha_i \delta_{x_i}\right\| = \sup\limits_{f \in B_{Lip_0(X)}} \left|\sum\limits_{i=1}^{n}\alpha_i f(x_i)\right| \geq \left|\sum\limits_{i=1}^{n}\alpha_i g(x_i)\right| = \left|\sum\limits_{i=1}^{n}\alpha_i \right|.$$
    Since $Q$ is a contraction, it can be extended linearly to $F(S_X)=\overline{span}^{\|\cdot\|}\left\{\delta_x: x \in S_X\right\}$. Moreover, $Q(\delta_x -\delta_{R(0)})=0$ for any $x \in S_X$ implies $\overline{span}^{\|\cdot\|}\left\{\delta_x -\delta_{R(0)}: x \in S_X\right\} \subseteq \ker(Q)$. Again for $\mu \in \ker(Q) \cap span\left\{\delta_x : x \in S_X\right\}$ we have $\mu = \sum\limits_{i=1}^{n}\alpha_i \delta_{x_i}$ with $\sum\limits_{i=1}^{n}\alpha_i=0$. Thus $\mu = \sum\limits_{i=1}^{n}\alpha_i \left(\delta_{x_i}-\delta_{T(0)}\right)$. Therefore, by density $\ker(Q)=\overline{span}^{\|\cdot\|}\left\{\delta_x -\delta_{R(0)}: x \in S_X\right\}$. This completes the proof and in fact, we get $F(S_X) = span\left\{\delta_{R(0)}\right\} \oplus
    \overline{span}^{\|\cdot\|}\left\{\delta_x -\delta_{R(0)}: x \in S_X\right\}$
\end{proof}
\begin{corollary}
    From Proposition~\ref{fph-->f}, Lemma~\ref{lemma comp}, and Lemma~\ref{lemma codim}, it follows that for any infinite dimensional Banach space $X$,$F^{ph}(X)$ has an isomorphic copy contained in $F(X) \oplus \mathbb{R}$ as a subspace. 
\end{corollary}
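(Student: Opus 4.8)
The plan is to splice the three cited results into a single chain of linear isomorphisms and embeddings. For brevity set $W := \overline{span}^{\|\cdot\|}\{\delta_x - \delta_{R(0)} : x \in S_X\}$. Proposition~\ref{fph-->f} already reduces everything about $F^{ph}(X)$ to the ordinary free space over the sphere, giving $F^{ph}(X) \simeq^h F(S_X)$. Lemma~\ref{lemma codim} then splits this free space as $F(S_X) = span\{\delta_{R(0)}\} \oplus W$ with $W$ of codimension one; since the complementary summand is one-dimensional this reads $F(S_X) \simeq^h W \oplus \mathbb{R}$. Finally Lemma~\ref{lemma comp} exhibits $W$ as a complemented --- hence, in particular, an isomorphically embedded --- subspace of $F(X)$.

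First I would record the resulting chain $F^{ph}(X) \simeq^h F(S_X) \simeq^h W \oplus \mathbb{R}$. Since $W$ is linearly isomorphic to a subspace of $F(X)$ by Lemma~\ref{lemma comp}, adjoining the one-dimensional factor gives a linear isomorphism of $W \oplus \mathbb{R}$ onto a subspace of $F(X) \oplus \mathbb{R}$. Composing with the homeomorphisms above produces the desired isomorphic copy of $F^{ph}(X)$ sitting inside $F(X) \oplus \mathbb{R}$, which is exactly the assertion. The infinite-dimensionality hypothesis on $X$ enters only to guarantee, via Benyamini--Sternfeld, the Lipschitz retraction $R : B_X \to S_X$ underlying both lemmas.

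The one genuinely delicate point --- and the step I expect to be the main obstacle --- is that the symbol $W$ a priori denotes two different objects. In Lemma~\ref{lemma codim} it is the closed span of $\{\delta_x - \delta_{R(0)}\}$ formed inside $F(S_X)$, whereas in Lemma~\ref{lemma comp} the same formal expression is formed inside $F(X)$; the functionals $\delta_x$ are evaluations on the two distinct Lipschitz spaces $Lip_0(S_X \cup \{0\})$ and $Lip_0(X)$. To transport the decomposition of $F(S_X)$ faithfully into $F(X)$ one must identify these two copies of $W$. The natural bridge is the canonical linear map $\hat\iota : F(S_X) \to F(X)$ induced by the isometric inclusion $S_X \cup \{0\} \hookrightarrow X$: by the McShane extension theorem for real-valued Lipschitz maps the restriction operator $Lip_0(X) \to Lip_0(S_X \cup \{0\})$ is a metric quotient, so its adjoint $\hat\iota$ is an isometric embedding acting by $\delta_x \mapsto \delta_x$. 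Under $\hat\iota$ the two copies of $W$ coincide isometrically, so the splitting from Lemma~\ref{lemma codim} and the complementation from Lemma~\ref{lemma comp} may be read consistently inside $F(X)$, completing the argument. In fact $\hat\iota$ already embeds all of $F(S_X)$, hence $F^{ph}(X)$, isometrically into $F(X)$ itself, so the factor $\oplus \mathbb{R}$ in the statement is a harmless over-estimate.
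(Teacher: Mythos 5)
Your proposal is correct and follows exactly the route the paper intends: splice Proposition~\ref{fph-->f}, Lemma~\ref{lemma codim} (giving $F(S_X)\simeq^h W\oplus\mathbb{R}$), and Lemma~\ref{lemma comp} (embedding $W$ complementably into $F(X)$). Your bridging step identifying the two copies of $W$ via the canonical isometric embedding $F(S_X\cup\{0\})\to F(X)$ is precisely the identification the paper makes implicitly (its proof of Lemma~\ref{lemma codim} already computes norms of elements of $F(S_X)$ against $B_{Lip_0(X)}$), and your closing observation is also sound --- the subspace statement alone follows from that embedding directly, the lemmas buying the stronger fact that the copy of $W\oplus\mathbb{R}$ is complemented in $F(X)\oplus\mathbb{R}$.
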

\section{McShane’s type extension theorem for positive homogeneous real-valued Lipschitz functions} \label{sec 4}
 E.~J.~McShane, in \cite[Theorem~1]{EOROF}, proved that any real-valued function $f$ defined on a subset $E$ of a metric space $M$ can be extended to the entire space $M$ while preserving the same Lipschitz constant. In the next proposition, we establish an analogous extension theorem for a positive homogeneous Lipschitz map. The proof is obtained by making a slight modification to the classical one. For further details on McShane’s extension theorem, we also refer the reader to \cite[Theorem~4.1.1]{LF}. The necessary modification is presented in the following proposition, while the remaining arguments follow from the cited proof.

\begin{proposition} \label{ext for ph}
    Let $S$ be a subset of the Banach space $X$, which is closed under nonnegative scalar multiplication, and $f \in Lip_0^{ph}(S)$. Then $f$ can be extended to $X$ preserving the same Lipschitz constant.
\end{proposition}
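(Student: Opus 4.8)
The plan is to adapt the classical McShane inf-convolution formula and to verify that, under our hypotheses, it automatically produces a positively homogeneous extension. Writing $L := Lip(f)$, I would set
\[
F(x) := \inf_{y \in S}\bigl\{ f(y) + L\,\|x - y\| \bigr\}, \qquad x \in X.
\]
First I would recall why $F$ is finite: fixing any $y_0 \in S$ and using that $f$ is $L$-Lipschitz on $S$ together with the triangle inequality gives $f(y) + L\|x-y\| \ge f(y_0) - L\|x - y_0\|$ for every $y \in S$, so the infimum is bounded below. The two classical assertions—that $F$ restricts to $f$ on $S$ and that $Lip(F) = L$—are precisely the content of McShane's theorem and carry over verbatim; I would therefore defer to the cited proof for these and concentrate on the genuinely new point, namely positive homogeneity.

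The key observation is that closure of $S$ under nonnegative scalar multiplication upgrades, for every $\alpha > 0$, to the equality $\alpha S = S$: indeed $\alpha S \subseteq S$ and $\alpha^{-1} S \subseteq S$ together force $S \subseteq \alpha S$. With this in hand I would compute, for $\alpha > 0$ and $x \in X$, via the substitution $y = \alpha z$ (a bijection of $S$ onto itself),
\[
F(\alpha x) = \inf_{z \in S}\bigl\{ f(\alpha z) + L\,\|\alpha x - \alpha z\| \bigr\} = \inf_{z \in S}\bigl\{ \alpha f(z) + \alpha L\,\|x - z\| \bigr\} = \alpha\,F(x),
\]
where the middle equality uses the positive homogeneity of $f$ on $S$ together with the homogeneity of the norm, $\|\alpha x - \alpha z\| = \alpha\|x - z\|$. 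For $\alpha = 0$ homogeneity is immediate: since $0 \in S$ and $F$ extends $f$, we have $F(0) = f(0) = 0 = 0 \cdot F(x)$. Hence $F \in Lip_0^{ph}(X)$ with $Lip(F) = L$, as required.

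I do not anticipate a serious obstacle: the essential point is simply that the inf-convolution formula respects the dilation structure precisely because both the function $f$ and the metric are positively homogeneous—this is exactly where working in a normed space, so that $\|\alpha x - \alpha y\| = \alpha\|x - y\|$ for $\alpha \ge 0$, rather than in a general metric space, is used. The only mild care needed lies in establishing $\alpha S = S$ and in confirming that $z \mapsto \alpha z$ is a genuine bijection of the index set $S$, after which the homogeneity identity is a one-line verification.
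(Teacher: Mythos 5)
Your proposal is correct and follows essentially the same route as the paper: the McShane formula (the paper works with both the sup- and inf-convolutions $F$ and $G$, you with the inf-convolution alone), deferring the extension and Lipschitz-constant claims to the classical proof, and verifying positive homogeneity via the substitution $y = \alpha z$, which is a bijection of $S$ onto itself because $S$ is closed under nonnegative scalar multiplication. Your explicit justification that $\alpha S = S$ and the $\alpha = 0$ check match the paper's argument in substance.
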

\begin{proof}
    Let us define the functions $F, G : X \to \mathbb{R}$, defined for $x \in X$ by 
$$F(x) := \sup_{y \in S} \left(f(y) - Lip(f) \|x-y\|\right)$$ and 
$$G(x) := \inf_{y \in S} \left(f(y) + Lip(f) \|x-y\|\right).$$
We show that $F, G$ are positive homogeneous Lipschitz extensions of $f$ preserving the same Lipschitz constant $Lip(f)$, and any other positive homogeneous Lipschitz extension $H$ of $f$ with $Lip(H)=Lip(f)$ satisfies the inequalities $F \le H \le G$. We only show that $F$ and $G$ are positive homogeneous, and the rest of the proof follows the same as \cite[Theorem~4.1.1]{LF}. Since $0 \in S$, $F(0)=G(0)=0$. Let $t > 0$ and $x \in X$. 
Because $S$ is closed under positive scalar multiplication, the map $y \mapsto z := y/t$ is a bijection of $S$ onto itself.  
Therefore,
\begin{eqnarray*}
F(tx)
&=& \sup_{y \in S} \left( f(y) - Lip(f)\|tx - y\| \right) \\
&=& \sup_{z \in S} \left( f(tz) - Lip(f)\|tx - tz\| \right) \\
&=& \sup_{z \in S} \left( t f(z) - Lip(f) t \|x - z\| \right) \\
&=& t \sup_{z \in S} \left( f(z) - Lip(f)\|x - z\| \right) \\
&=& t F(x).
\end{eqnarray*}
Hence, $F \in Lip_0^{ph}(X)$ and similarly we can prove the same for $G$.
\end{proof}
We use this extension result to prove that $Lip_0^{ph}(X)$ is infinite dimensional for $dim(X)>1$. 
\begin{proposition} \label{fph cont l1r}
    Let $X$ be a Banach space with $dim(X)\geq 2$. Then $F^{ph}(X)$ contains a copy of $L^1(\mathbb{R})$ as a subspace, and therefore $Lip_0^{ph}(X)$ is infinite dimensional. 
\end{proposition}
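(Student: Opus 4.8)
The plan is to transfer the already-understood two-dimensional case to an arbitrary $X$ by embedding the free space of a two-dimensional subspace into $F^{ph}(X)$. Since $\dim(X)\geq 2$, I would fix a two-dimensional subspace $Y\subseteq X$; being a linear subspace, $Y$ is closed under nonnegative scalar multiplication and carries the metric inherited from $X$, so $Lip_0^{ph}(Y)$ and its predual $F^{ph}(Y)$ are defined. I would then establish two facts: first, that $F^{ph}(Y)$ embeds isometrically into $F^{ph}(X)$, and second, that $F^{ph}(Y)$ is linearly isomorphic to $L^1(\mathbb{R})$. Composing these yields an isomorphic copy of $L^1(\mathbb{R})$ inside $F^{ph}(X)$, which is therefore infinite dimensional; and since $Lip_0^{ph}(X)=F^{ph}(X)^{\ast}$, it is infinite dimensional as well.

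For the embedding, consider the inclusion $\iota:Y\hookrightarrow X$, which is linear, hence positively homogeneous, and isometric. By Facts~\ref{facts}(6) it induces a linear map $C_\iota:F^{ph}(Y)\to F^{ph}(X)$ determined by $C_\iota(\delta_y^{ph})=\delta_y^{ph}$ for $y\in Y$, where the argument is the point-evaluation in $F^{ph}(Y)$ and the value the corresponding one in $F^{ph}(X)$. The key step is that $C_\iota$ preserves the norm on the dense span of the $\delta_y^{ph}$. For $\mu=\sum_i\alpha_i\delta_{y_i}^{ph}$ with $y_i\in Y$, the restriction of any $f\in B_{Lip_0^{ph}(X)}$ to $Y$ lies in $B_{Lip_0^{ph}(Y)}$, so using the pairing of Facts~\ref{facts}(4) one gets $\|C_\iota\mu\|_{F^{ph}(X)}\leq\|\mu\|_{F^{ph}(Y)}$. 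For the reverse inequality I would invoke the McShane-type extension theorem (Proposition~\ref{ext for ph}): every $g\in B_{Lip_0^{ph}(Y)}$ extends to some $\tilde g\in B_{Lip_0^{ph}(X)}$ with $\tilde g|_Y=g$, whence $|\sum_i\alpha_i g(y_i)|=|\sum_i\alpha_i\tilde g(y_i)|\leq\|C_\iota\mu\|_{F^{ph}(X)}$, and taking the supremum over such $g$ gives $\|\mu\|_{F^{ph}(Y)}\leq\|C_\iota\mu\|_{F^{ph}(X)}$. Hence $C_\iota$ is isometric on a dense subspace and extends to an isometric embedding of $F^{ph}(Y)$ into $F^{ph}(X)$.

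To identify $F^{ph}(Y)$, I would exploit that $Y$ is two-dimensional: fix a linear isomorphism $T:\mathbb{R}^2\to Y$. Since all norms on a finite-dimensional space are equivalent, both $T$ and $T^{-1}$ are Lipschitz, and being linear they are positively homogeneous. By Facts~\ref{facts}(6) they induce $C_T:F^{ph}(\mathbb{R}^2)\to F^{ph}(Y)$ and $C_{T^{-1}}:F^{ph}(Y)\to F^{ph}(\mathbb{R}^2)$; the defining commuting relation together with the uniqueness asserted in Facts~\ref{facts}(6) yields functoriality, i.e. $C_{T^{-1}}\circ C_T=C_{\mathrm{id}}=\mathrm{id}$ and likewise on the other side, so $F^{ph}(Y)\simeq^h F^{ph}(\mathbb{R}^2)$. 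By Example~\ref{example}(1), $F^{ph}(\mathbb{R}^2)\simeq L^1(\mathbb{R})$. Combining, $F^{ph}(Y)\simeq L^1(\mathbb{R})$, and through the isometric embedding of the previous paragraph, $F^{ph}(X)$ contains an isomorphic copy of $L^1(\mathbb{R})$, completing the argument.

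The main obstacle is the embedding step, and more precisely the reverse norm inequality $\|\mu\|_{F^{ph}(Y)}\leq\|C_\iota\mu\|_{F^{ph}(X)}$. This is exactly where positive homogeneity is indispensable: in general there is no Lipschitz retraction from $S_X$ onto $S_Y$, so an ordinary restriction-and-extension argument carried out on the spheres (via Proposition~\ref{fph-->f}) would fail, whereas the positively homogeneous McShane extension of Proposition~\ref{ext for ph} produces norm-preserving lifts of functionals on $Y$ to $X$. The remaining care is essentially bookkeeping: verifying that $C_\iota$ is well defined independently of the chosen representation of $\mu$, and that the isometry and the isomorphism pass to the completions.
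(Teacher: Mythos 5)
Your proposal is correct and follows essentially the same route as the paper: fix a two-dimensional subspace, use the McShane-type extension theorem (Proposition~\ref{ext for ph}) to show the induced map embeds its $F^{ph}$ isometrically into $F^{ph}(X)$, and invoke Example~\ref{example}(1) to identify it with $L^1(\mathbb{R})$. The only cosmetic difference is that the paper transports the norm of $X$ to $\mathbb{R}^2$ via a linear isometry rather than using functoriality of $C_T$ for an arbitrary isomorphism, and your write-up usefully makes explicit the restriction/extension argument for the two norm inequalities that the paper only sketches.
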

\begin{proof}
    Let $x_1, x_2$ be two linearly independent vectors in $X$. We consider the norm on $\mathbb{R}^2$ induced by the norm on $X$, that is 
    $\|(a,b)\|_i = \|ax_1+bx_2 \|$ for all $(a,b) \in \mathbb{R}^2$. Then it follows that $\|\cdot\|_i$ is a norm on $\mathbb{R}^2$ and hence the mapping $\mathfrak{i} : \left(\mathbb{R}^2, \|\cdot\|_i\right) \to X$ given by $\mathfrak{i}(a,b)=a x_1 + b x_2$ is a linear isometry. 
    Put $Z= \mathfrak{i}(\mathbb{R}^2)$. then the inclusion map $i: Z \to X$ is the canonical embedding. Now recall that the classical linearization map of $\delta_X \circ i$ in the Lipschitz, namely $\widehat{\delta_X \circ i} : F(Z) \to F(X)$ becomes an isometric embedding. The proof relies on the fact that any $h \in Lip_0(Z)$ can be extended to $\hat{h} \in Lip_0(X)$ without altering the Lipschitz norm. Similarly, from Proposition~\ref{ext for ph}, any $h \in Lip_0^{ph}(Z)$ can be extended to $\hat{h} \in Lip_0^{ph}(X)$ without changing the Lipschitz norm. Hence, in this setup, $F^{ph}(\mathbb{R}^2)$ appears as a closed subspace of $F^{ph}(X)$. Therefore, by Example~\ref{example}, $F^{ph}(X)$ contains a copy of $L^1(\mathbb{R})$ as a subspace, and consequently, $Lip_0^{ph}(X)$ is infinite dimensional.
\end{proof}
{\bf Declaration.} 
The author was financially supported by the Senior Research Fellowship from the National Institute of Science Education and Research, Bhubaneswar, funded by the Department of Atomic Energy, Government of India. The author has no competing interests or conflicts of interest to declare relevant to this article's content.

\end{document}